\documentclass[twoside,10pt,a4paper,intlimits]{amsart}

\usepackage[latin1]{inputenc}

\usepackage[centertags]{amsmath}

\usepackage{amsmath}
\usepackage{amsthm}
\usepackage{amssymb}
\usepackage{amsfonts}
\usepackage{amsxtra}
\usepackage{bbm}
\usepackage{color}
\usepackage{xspace}
\usepackage{mathabx}
\makeatletter
\let\diameter\@undefined                        
\makeatother

\usepackage{enumitem}

\usepackage{ifthen}
\usepackage{graphicx}

\newtheorem{theorem}{Theorem}[section]

\newtheorem{corollary}[theorem]{Corollary}

\newtheorem{remark}[theorem]{Remark}

\definecolor{lightgrey}{rgb}{.7,.7,.7}

\newcommand{\hellgrau}[1]{\textcolor{lightgrey}{#1}}

\newcounter{notectr}
\setcounter{notectr}{1}
\newcommand{\note}[1]{\ifthenelse{\thenotectr=1}{\hellgrau{#1}}{}}

\newcommand{\argmin}{\ensuremath{\operatorname{argmin}}}

\newcommand{\divo}{\operatorname{div}}

\numberwithin{equation}{section}

\newcommand{\definedas}{\mathrel{:=}}
\newcommand{\energy}{\ensuremath\mathcal{J}}

\providecommand{\setN}{\ensuremath{\mathbb{N}}}
\providecommand{\setR}{\ensuremath{\mathbb{R}}}

\newcommand{\ie}{i.e.,\xspace}

\newcommand{\dx}{\,{\rm d}x}

\providecommand{\abstmp}[2]{{#1\lvert{#2}#1\rvert}}
\providecommand{\abs}[1]{\abstmp{}{#1}}
\providecommand{\normmtmp}[2]{{#1\lvert\hspace{-0.07em}#1\lvert\hspace{-0.07em}#1\lvert{#2}
    #1\rvert\hspace{-0.07em}#1\rvert\hspace{-0.07em}#1\rvert}}  
\providecommand{\normm}[1]{\normmtmp{}{#1}}

\providecommand{\normtmp}[2]{{#1\lVert{#2}#1\rVert}}
\providecommand{\norm}[1]{\normtmp{}{#1}}

\begin{document}

\title[Enforcing a Discrete Maximum Principle by a Cutoff]{A Note on why Enforcing
  Discrete Maximum Principles by a simple a Posteriori Cutoff\\ is a Good Idea}

\author[C.~Kreuzer]{Christian Kreuzer}
\address{Christian Kreuzer,
 Fakult{\"a}t f{\"u}r Mathematik,
 Ruhr-Universit{\"a}t Bochum,
 Universit{\"a}tsstrasse 150, D-44801 Bochum, Germany
 }%
\urladdr{http://www.ruhr-uni-bochum.de/ffm/Lehrstuehle/Kreuzer/}
\email{christan.kreuzer@rub.de}

\begin{abstract}
  Discrete maximum principles in the approximation of partial
  differential equations are crucial for the preservation of
  qualitative properties of physical models. 
  In this work we enforce the discrete maximum principle by performing a simple
  cutoff. We show that for many problems this 
  a posteriori procedure even improves the approximation in the
  natural energy norm. 
  The results apply to many different kinds of approximations
  including conforming higher order and $hp$-finite elements. Moreover
  in the case of finite element approximations there is no geometrical
  restriction
  on the partition of the domain.  
\end{abstract}

\keywords{discrete maximum principle \and finite elements \and nonlinear pde
 \and p-Laplace \and conforming approximation \and reaction diffusion}
  \subjclass[2010]{65N30 \and 35J92 \and 35J47 \and 35J15}

\maketitle


\section{Introduction}
\label{sec:introduction}

Consider a function $u:\Omega\to \setR$ on some bounded domain
$\Omega\subset\setR^d$ such that 
\begin{align}\label{eq:rd}
  -\Delta u + c \,u \le 0
\end{align}
in the variational sense for some nonnegative $c$. Then for $u^+=\max\{0,u\}$, we have the estimate 
\begin{align}\label{eq:mp}
  \sup_{\Omega} u \le \sup_{\partial\Omega} u^+,  
\end{align}
which is well-known as weak maximum principle; compare e.g. with
\cite{GilbargTrudinger:83}. 

Maximum principles usually reflect fundamental physical
principles like e.g. the positivity of the density and it is 
 desirable that numerical schemes respect physical
 principles. Therefore, for a conforming approximation $U$,
computed by some numerical scheme, there arises the question if it
satisfies a so called discrete maximum principle
\begin{align}
  \label{eq:DMP}\tag{DMP}
  \sup_{\Omega} U \le \sup_{\partial\Omega} U^+.
\end{align}

Suppose for example that the approximation $U$ is generated by a 
conforming finite element method. 
In this case there are plenty of results on discrete
maximum principles; without attempting to
provide a complete list, we refer to
\cite{CiarletRaviart:73,Santos:82,
BrandtsKorotovKrizek:2008,LiHuang:10,
DieningKreuzerSchwarzacher:12}. All those results have in common that
they base on piece wise affine finite element spaces and on
strong geometrically restrictions on the underlying partitions of the
domain. To be more precise, they are basically restricted to
non-obtuse or even acute simplicial meshes. Although there are some
results on non-obtuse refinement in
\cite{KorotovKrizek:2011,KorotovKrizek:2005},
it is clear that those restrictions introduce serious complications
for the refinement and meshing of the domain $\Omega$. 
The situation is even less satisfying for 
higher order finite elements let alone $hp$-finite elements. To our
best knowledge for these schemes discrete maximum principles 
are known only  in a relaxed  sense \cite{Schatz:80} or in very
restrictive situations; see
\cite{HoehnMittelmann:81}. 

Based on an analysis of discrete Green's functions, Dr{\u{a}}g{\u{a}}nescu,  Dupont,  and Scott
\cite{DragDupScott:04} suggest that the discrete maximum principle 
for piece wise affine functions may only fail in a small region close
to the boundary. This motivates to simply 
cutoff those small regions with unphysical behavior. In other 
words, we define 
\begin{align}\label{eq:cut}
  U^* = \min\big\{U,\,\sup_{\partial\Omega} U^+\big\}\qquad\text{in}~\Omega.
\end{align}
Obviously the modified function
$U^*$ satisfies the \eqref{eq:DMP} independent of the polynomial degree or some
underlying partition of $\Omega$. Although so far 
this technique has lacked of mathematical justification,
it is actually quite popular in engineering.
   
In this paper we overcome this drawback. In fact, using variational techniques from
\cite{DieningKreuzerSchwarzacher:12,OttLM98,BilFuc02}, we prove in
section~\ref{sec:cutting} that the modified function $U^*$ is even a
better approximation than $U$. To be more precise, if
$u_{|\partial\Omega}=U_{|\partial\Omega}$, then we have 
\begin{align}
  \label{eq:main}
  \normm{u-U^*}\le\normm{u-U}
\end{align}
in the norm
$\normm{\cdot}^2=\int_\Omega\abs{\nabla\cdot}^2+c\,\abs{\cdot}^2\,{\rm
  d}x$, induced by the reaction diffusion differential operator in
\eqref{eq:rd}. In section~\ref{sec:applications} we generalize the
techniques to systems of pde's and the $p$-Laplace operator. 

We emphasize that the results are not restricted to finite element
approximation nor to Galerkin schemes. In fact, the results can be
applied to any conforming approximation of $u$.

\section{Enforcing the Discrete Maximum Principle}
\label{sec:cutting}
In this section we shall prove the main result~\eqref{eq:cut} for functions
 satisfying \eqref{eq:rd}. 
To this end, for the bounded domain $\Omega\subset\setR^d$, $d\in\setN$, let 
$L^2(\Omega)$ and $H^1(\Omega)$ be
the space of square integrable Lebesgue and
Sobolev functions on $\Omega$, respectively. 
We denote by $\big(H^1(\Omega)\big)^*$ the dual of $H^1(\Omega)$ 
and by $H_0^1(\Omega)$ the subspace of functions in $H^1(\Omega)$
with vanishing trace on the boundary $\partial\Omega$.

The variational formulation of \eqref{eq:rd} reads as follows:
Let $0\le c\in L^\infty(\Omega)$, \ie a nonnegative essentially
bounded function.  We assume that $u\in
H^1(\Omega)$ such that  
\begin{align}\label{eq:rdweak}
  \int_\Omega\nabla u\cdot\nabla v +c\, uv\dx  =: F(v)\le 0 \qquad
  \text{for all}~v\in H_0^1(\Omega)~\text{with}~v\ge 0~\text{in}~\Omega.
\end{align}
With this definition we have $F\in \big(H^{1}(\Omega)\big)^*$ 
and it is well known that $u$ satisfies the weak maximum principle
\eqref{eq:mp}; see \cite{GilbargTrudinger:83}.
Moreover, $u$ is the unique minimizer of 
\begin{align}\label{eq:energy}
  \energy(v):=\frac12 \int_\Omega |\nabla v|^2+ c\,|v|^2\dx - F(v) 
\end{align}
in $u+H^1_0(\Omega)$. In other words, $u$ is minimal among all
functions in $H^1(\Omega)$ that coincide with $u$ on the boundary
$\partial\Omega$. 

Let now $U\in H^1(\Omega)$ be some approximation to $u$. Note that 
there is no restriction on the kind of approximation beyond of that it
is conforming; we will
come back to this issue in Remark \ref{r:bnd} and the Conclusion \S\ref{sec:conclusion} below.
It follows by
standard arguments that 
\begin{align}\label{eq:U^*}
  U^*\definedas \min\big\{U,\,\sup_{\partial\Omega} U^+\big\}=
  (U-\sup_{\partial\Omega} U^+\big)^-+\sup_{\partial\Omega} U^+\in H^1(\Omega).
\end{align}
Consequently, $U^*$ satisfies the 
\eqref{eq:DMP}. If $U_{|\partial\Omega}=u_{|\partial\Omega}$, we have
by \eqref{eq:mp} that 
\begin{align*}
  |u(x)-U^*(x)| &= \left\{
    \begin{alignedat}{2}
      &|u(x) - U(x)|, &\quad&\text{if}~ U(x)\le
      \sup_{\partial\Omega} U^+\\
      &
      \sup_{\partial\Omega} U^+ -
      u(x), &\quad&\text{if}~U(x)>
      \sup_{\partial\Omega} U^+
    \end{alignedat}\right\}
    \le \abs{u(x)-U(x)},
\end{align*}
for almost every $x\in\Omega$. Hence, it follows 
\begin{align}\label{eq:L2}
  \norm{u-U^*}_{L^2(\Omega)}\le 
  \norm{u-U}_{L^2(\Omega)}.
\end{align}
Here $\|\cdot\|_{L^2(\Omega)}^2\definedas\int_\Omega|\cdot|^2\dx$
denotes the standard norm  on $L^2(\Omega)$. 

The corresponding estimate \eqref{eq:main} for the energy norm is
less obvious. To prove this estimate we need  
some more properties of the truncated function $U^*$.  
Similarly as in \cite{OttLM98} we have on the one hand, that 
\begin{align*}
  U^*(x)=
  \begin{cases}
    U(x) \qquad&\text{if}~ U(x)\le \sup_{\partial\Omega} U^+,
    \\
    \sup_{\partial\Omega} U^+\ge0,\qquad&\text{if}~ U(x)> \sup_{\partial\Omega} U^+,
  \end{cases}\qquad x\in\Omega.
\end{align*}
and hence we obtain 
\begin{subequations}\label{eq:U*props}
  \begin{align}
    \label{eq:U^*<U}
    U^*\le U\qquad\text{and}\qquad\abs{U^*}&\leq
    \abs{U}\qquad\text{in}~\Omega.
  \end{align}
  On the other hand we have
  \begin{align*}
    \nabla U^*(x) =
    \begin{cases}
      \nabla U(x),\qquad&\text{if}~ U(x)\le \sup_{\partial\Omega} U^+,
      \\
      0,\qquad&\text{if}~ U(x)> \sup_{\partial\Omega} U^+,
    \end{cases} \qquad x\in\Omega
  \end{align*}
  and consequently
  \begin{align}
    \label{eq:nablaU^*<nablaU}
    \abs{\nabla U^*}\le \abs{\nabla U}\qquad\text{in}~\Omega.
  \end{align}
\end{subequations}
These observations are the key properties for proving the following result.
\begin{theorem}\label{thm:main}
  Suppose the conditions of this section. In particular,
  let $U\in H^1(\Omega)$ and let $U^*\in H^1(\Omega)$ be the
  modification of $U$ according to \eqref{eq:U^*}. Then we have for
  the energy $\mathcal{J}\colon H^1(\Omega)\to \setR$ defined in \eqref{eq:energy}, that
  \begin{align}\label{eq:reduction}
    \energy(U^*)\le \energy(U)\qquad\text{and}\qquad
    U^*-U\in H_0^{1}(\Omega).
  \end{align}
\end{theorem}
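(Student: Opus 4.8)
The plan is to prove the two assertions in \eqref{eq:reduction} separately, the second being essentially a bookkeeping observation and the first being the substantive estimate. For the membership $U^*-U\in H_0^1(\Omega)$, I would use the representation in \eqref{eq:U^*}, namely $U^*-U = (U-\sup_{\partial\Omega}U^+)^- - (U-\sup_{\partial\Omega}U^+)$, and recall that for $w\in H^1(\Omega)$ one has $w^- = \tfrac12(\lvert w\rvert - w)\in H^1(\Omega)$ with the chain-rule formula for $\nabla w^-$; the trace of $(U-\sup_{\partial\Omega}U^+)^-$ vanishes precisely because $U^+\le\sup_{\partial\Omega}U^+$ on $\partial\Omega$, so the negative part of $U-\sup_{\partial\Omega}U^+$ has zero boundary values. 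This is already implicit in the remark preceding the theorem, so I would keep it brief.

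For the energy inequality $\energy(U^*)\le\energy(U)$, the idea is to split $\energy$ into its quadratic part $Q(v):=\tfrac12\int_\Omega\lvert\nabla v\rvert^2 + c\lvert v\rvert^2\dx$ and the linear part $-F(v)$, and to estimate the two separately. For the quadratic part, the pointwise bounds \eqref{eq:U^*<U}, i.e. $\lvert U^*\rvert\le\lvert U\rvert$, and \eqref{eq:nablaU^*<nablaU}, i.e. $\lvert\nabla U^*\rvert\le\lvert\nabla U\rvert$, together with $c\ge 0$, give immediately $Q(U^*)\le Q(U)$ after integrating. The remaining task is to show $-F(U^*)\le -F(U)$, equivalently $F(U^*)\ge F(U)$, equivalently $F(U-U^*)\le 0$. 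Now $U-U^* = (U-\sup_{\partial\Omega}U^+)^+\ge 0$, and moreover we have just shown $U-U^*\in H_0^1(\Omega)$; hence $U-U^*$ is an admissible test function in \eqref{eq:rdweak}, which yields exactly $F(U-U^*)\le 0$. Adding the two pieces gives $\energy(U^*) = Q(U^*) - F(U^*) \le Q(U) - F(U) = \energy(U)$.

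The main obstacle — though it is a mild one — is making sure that $U-U^*$ really is a legitimate test function in \eqref{eq:rdweak}: one needs both its nonnegativity (clear from the case distinction defining $U^*$) and the zero-trace property, and the latter hinges on the hypothesis that the cutoff level is $\sup_{\partial\Omega}U^+$ and not something smaller. I would flag that the argument does \emph{not} require $U_{|\partial\Omega}=u_{|\partial\Omega}$ for the energy inequality itself (that hypothesis enters only in \eqref{eq:L2} and in interpreting the result via \eqref{eq:main}); this is worth stating explicitly since the theorem as phrased only assumes $U\in H^1(\Omega)$. A secondary subtlety is the justification of the pointwise gradient identities for $U^*$, which rest on the standard fact that $\nabla w = 0$ a.e. on $\{w = \text{const}\}$ for $w\in H^1(\Omega)$; since these are asserted in \eqref{eq:U*props} before the theorem, I would simply invoke them.
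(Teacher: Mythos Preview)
Your proof is correct and follows essentially the same route as the paper's: split the energy into its quadratic and linear pieces, use the pointwise bounds \eqref{eq:U*props} for the former, and test \eqref{eq:rdweak} with $U-U^*$ for the latter. Two small remarks: in your first paragraph you mix conventions (the paper's $w^-=\min\{w,0\}$ versus your $w^-=\tfrac12(|w|-w)$) and it is the \emph{positive} part $(U-\sup_{\partial\Omega}U^+)^+$ whose trace vanishes, not the negative part---but you get this right in the second paragraph where it matters; and your explicit check that $U-U^*\in H_0^1(\Omega)$ before invoking \eqref{eq:rdweak} is in fact more careful than the paper, which writes ``$F(v)\le 0$ for all $v\in H^1(\Omega)$, $v\ge 0$'' when the hypothesis only gives this for $v\in H_0^1(\Omega)$.
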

\begin{proof}
  The second claim is a direct consequence of the definition
  \eqref{eq:U^*} of $U^*$. For the first claim we observe from
  \eqref{eq:nablaU^*<nablaU} that 
  \begin{align*}
    \int_\Omega\abs{\nabla U^*}^2\dx \le     \int_\Omega\abs{\nabla U}^2\dx.
  \end{align*}
  Furthermore, it follows from the second inequality in \eqref{eq:U^*<U} that 
  \begin{align}\label{ineq:L2}
    \int_\Omega \abs{U^*}^2\dx\le\int_\Omega \abs{U}^2\dx
  \end{align}
  and since $F(v)\le 0$ for all $v\in H^1(\Omega)$, $v\ge 0$, we
  obtain 
  \begin{align*}
    F(U)-F(U^*)=F(U-U^*) \le 0\quad\Rightarrow\quad -F(U^*)\le -F(U).
  \end{align*}
  Combining these estimates together with the definition
  \eqref{eq:energy} of the energy
 proves the theorem.
\end{proof}

\begin{corollary}\label{cor:main}
  Suppose the conditions of this sections and assume further that 
  \begin{align*}
    U-u\in H_0^1(\Omega), \qquad\text{\ie}~u=U~\text{on}~\partial\Omega.
  \end{align*}
  Then we have 
  \begin{align*}
    \normm{u-U^*}\le \normm{u-U}
  \end{align*}
  for the energy norm $\normm{\cdot}^2\definedas
  \int_\Omega\abs{\nabla\cdot}^2 +c\,\abs{\cdot}^2\dx$ on
  $H^1_0(\Omega)$ induced by \eqref{eq:rdweak}. 
\end{corollary}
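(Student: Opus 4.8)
The plan is to deduce Corollary~\ref{cor:main} from Theorem~\ref{thm:main} by exploiting the variational characterisation of $u$ as the unique minimiser of $\energy$ in $u+H^1_0(\Omega)$, together with the standard "quadratic expansion around the minimiser" identity. The key observation is that the energy $\energy$ is a quadratic functional whose second-order part is precisely $\tfrac12\normm{\cdot}^2$; hence for any $w\in u+H^1_0(\Omega)$ one has the exact identity
\begin{align*}
  \energy(w)=\energy(u)+\tfrac12\normm{w-u}^2,
\end{align*}
because the first-order term vanishes by the Euler--Lagrange equation defining $u$ (which is exactly \eqref{eq:rdweak} read as an equality on $H^1_0(\Omega)$, valid since $u$ minimises $\energy$ over $u+H^1_0(\Omega)$).

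First I would record that both $U$ and $U^*$ lie in $u+H^1_0(\Omega)$: by hypothesis $U-u\in H^1_0(\Omega)$, and by the second part of Theorem~\ref{thm:main} we have $U^*-U\in H^1_0(\Omega)$, so $U^*-u=(U^*-U)+(U-u)\in H^1_0(\Omega)$ as well. Next I would apply the quadratic expansion above to $w=U$ and to $w=U^*$, obtaining
\begin{align*}
  \tfrac12\normm{u-U^*}^2=\energy(U^*)-\energy(u)
  \quad\text{and}\quad
  \tfrac12\normm{u-U}^2=\energy(U)-\energy(u).
\end{align*}
Subtracting $\energy(u)$ is legitimate because it is a finite real number ($u\in H^1(\Omega)$ and $F\in (H^1(\Omega))^*$). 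Then the inequality $\energy(U^*)\le\energy(U)$ from Theorem~\ref{thm:main} immediately yields $\normm{u-U^*}^2\le\normm{u-U}^2$, and taking square roots finishes the proof.

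The only real point requiring care — and the step I expect to be the mild obstacle — is justifying the quadratic expansion identity, i.e. that the cross term $\int_\Omega\nabla u\cdot\nabla(w-u)+c\,u\,(w-u)\dx-F(w-u)$ vanishes for $w-u\in H^1_0(\Omega)$. This is exactly the statement that $u$ solves \eqref{eq:rdweak} with equality against all test functions in $H^1_0(\Omega)$ (not merely the inequality against nonnegative ones), which holds because $u$ is the unconstrained minimiser of $\energy$ over the affine space $u+H^1_0(\Omega)$ as asserted in the paragraph following \eqref{eq:energy}; differentiating $t\mapsto\energy(u+t\varphi)$ at $t=0$ for arbitrary $\varphi\in H^1_0(\Omega)$ gives the Euler--Lagrange equation. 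Once this is in hand, everything else is a one-line algebraic manipulation, and no regularity or geometric assumptions on $U$, on the approximation scheme, or on $\Omega$ enter — consistent with the generality emphasised in the introduction.
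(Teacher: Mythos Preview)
Your proof is correct and matches the paper's argument essentially line for line: both verify $U,U^*\in u+H^1_0(\Omega)$, combine Theorem~\ref{thm:main} with the identity $\energy(v)-\energy(u)=\tfrac12\normm{u-v}^2$ for $v\in u+H^1_0(\Omega)$, and conclude. The step you flag as the only delicate point is actually immediate here, since $F$ is \emph{defined} in \eqref{eq:rdweak} by $F(v):=\int_\Omega\nabla u\cdot\nabla v+c\,uv\,\mathrm{d}x$, so the cross term vanishes by definition and no separate Euler--Lagrange argument is needed.
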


\begin{proof}
  It follows
  from the assumption $u-U\in H_0^1(\Omega)$ that  $U,U^*\in
  u+H_0^1(\Omega)$.  We recall that $u$ is the unique minimizer of the
  energy $\energy$  
  in $u+H_0^1(\Omega)$. Therefore, together with Theorem~\ref{thm:main}, we have  
  \begin{align}\label{eq:4}
    0\le \energy(U^*)-\energy(u)\le \energy(U)-\energy(u).
  \end{align}
  On the other hand, for arbitrary $v\in u+H_0^1(\Omega)$ we have
  $u-v\in H_0^1(\Omega)$ and it follows from
  \eqref{eq:rdweak} that 
  \begin{align*}
    \energy(v)&-\energy(u)= \frac12 \int_\Omega |\nabla v|^2+
    c\,|v|^2\dx  -\frac12 \int_\Omega |\nabla u|^2+ c\,|v|^2\dx
    + F(u-v) 
    \\
    &= \frac12 \int_\Omega |\nabla v|^2+ c\,|v|^2\dx - \int_\Omega
    \nabla v\cdot\nabla u+ c\,vu\dx + \frac12 \int_\Omega |\nabla
    u|^2+ c\,|u|^2\dx 
    \\
    &=\frac12\normm{u-v}^2.
  \end{align*}
  Using this observation with $v=U$ respectively $v=U^*$ in \eqref{eq:4} proves
  the claim. 
\end{proof}
\begin{remark}
  For $c\equiv 0$ the maximum principle \eqref{eq:mp} reads
  as
  \begin{align*}
    \sup_{\Omega}u\le \sup_{\partial \Omega}u.
  \end{align*}
  Consequently, in this case, we define 
  \begin{align*}
    U^*\definedas \min\big\{U,\,\sup_{\partial\Omega} U\big\}.
  \end{align*}
  Since $c\equiv0$ we do not need to
  have the second estimate in \eqref{eq:U^*<U} in order to prove
  Theorem \ref{thm:main}. All other estimates, namely the first
  estimate in \eqref{eq:U^*<U} and \eqref{eq:nablaU^*<nablaU}, stay
  true for $U^*$ as defined above. Therefore, 
  Theorem \ref{thm:main} and Corollary \ref{cor:main} are
  still valid in this case; compare also with the examples of Section~\ref{sec:applications} 
  below.
\end{remark}

\begin{remark}\label{r:bnd}
  In the approximation of solutions to partial differential
  equations  with finite elements one usually
  considers the error of the boundary values and the 
  residual interior $\Omega$, separately; see e.g. \cite{BrennerScott:08}. To be more precise, let
  $u$ satisfy \eqref{eq:rdweak} and let $G$ be a discrete
  approximation of $u$ on $\partial\Omega$. We consider an approximation $U\in
  H_0^1(\Omega)$ with $U=G$ on $\partial\Omega $ to the weak solution $u_G\in H_0^1(\Omega)$ of the problem  
  \begin{align*}
    -\Delta u_G +c\,u_G = -\Delta u +c\,u \quad\text{in}~\Omega,\qquad\text{and}\qquad
    u_G=G~\text{on}~\partial\Omega.
  \end{align*}
  Then $U$ satisfies Theorem
  \ref{thm:main} and Corollary \ref{cor:main} with $u_G$ instead of
  $u$, \ie let $U^*\in H^1(\Omega)$ be defined as in \eqref{eq:U^*}, then
  \begin{align*}
    \normm{u-U^*}\le \normm{u_G-U^*} + \normm{u-u_G}\le \normm{u_G-U} + \normm{u-u_G}
  \end{align*}
  The error $\normm{u-u_G}$ can be estimated by means of the trace $u-G$
  on~$\partial\Omega$. Similar
  techniques can be applied in the case of a curved boundary.
\end{remark}

\section{Extensions}
\label{sec:applications}
In the previous
section, for the ease of presentation,  we restricted ourselves to linear scalar valued 
reaction diffusion problems. However, the presented
ideas can be generalized to more complicated problems. We
shall present two examples.

\subsection{Convex Hull Property}
\label{sec:convex-hull-property}
In this section we shall generalize the results of Section
\ref{sec:cutting} to vector valued functions.
In order to do so, we first have to
generalize the cutoff process in \eqref{eq:cut} to higher
dimensions. To this end, let $K\subset \setR^m$, $m\in\setN$, be a
convex set and define $\Pi_K:\setR^m\to K$ to be the closest point projection
with respect to the Euclidean norm $|\cdot|\colon\setR^m\to\setR$. In other
words
\newcommand{\bsx}{\ensuremath{\boldsymbol{x}}}
\newcommand{\bsy}{\ensuremath{\boldsymbol{y}}}
\newcommand{\bsu}{\ensuremath{\boldsymbol{u}}}
\newcommand{\bsU}{\ensuremath{\boldsymbol{U}}}
\newcommand{\bsv}{\ensuremath{\boldsymbol{v}}}
\begin{align}\label{eq:PiK}
  \Pi_K x := \argmin_{y \in K} \abs{x-y}.
\end{align} 
This definition can be
extended to 
\begin{align*}
  \Pi_K: L^2(\Omega)^m\to L^2(\Omega)^m\qquad\text{setting}\qquad (\Pi_K
  \bsv)(x):=\Pi_K \bsv(x).
\end{align*}
It follows from  the convexity of $K$, with elementary computations,  that
$\Pi_K$ is $1$-Lipschitz and hence a generalized chain rule implies  $
\Pi_K:H^1(\Omega)^m\to H^1(\Omega)^m$ with
\begin{align}\label{eq:estPiK}
 |\nabla
  (\Pi_K \bsv)(x)|\le \operatorname{Lip}(\Pi_K) |\nabla \bsv(x)| = |\nabla
  \bsv(x)|\qquad x\in\Omega;
\end{align}
 compare with
\cite{BilFuc02,AmbrosioDalMaso:1990,AmbrosioFuscoPallara:2000}. 
This is the replacement for \eqref{eq:nablaU^*<nablaU}.

Let $\bsu\in H^1(\Omega)^m$ be such that 
\begin{align*}
  \int_\Omega \nabla\bsu\colon\nabla\bsv \dx = 0\qquad\text{for
    all}~\bsv\in H_0^1(\Omega)^m.
\end{align*}
 It is well known that $\bsu$ satisfies the
convex hull property
\begin{align*}
  \bsu(\Omega)\subset\operatorname{conv\,hull}\bsu(\partial\Omega)
\end{align*}
(see \cite{BilFuc02}) and that $\bsu$ is the unique minimizer of the energy 
\begin{align*}
  \energy(\bsv)\definedas \int_\Omega\frac12 |\nabla \bsv|^2\dx\qquad\text{in}~\bsu+ H_0^1(\Omega)^m.
\end{align*}
Let $\bsU\in H^1(\Omega)^m$ 
be some approximation of 
$\bsu$ and define
\begin{align*}
  \bsU^*\definedas \Pi_K\bsU, \qquad\text{with}~K\definedas
\operatorname{conv\,hull}\bsU(\partial\Omega).
\end{align*}
Consequently $\bsU^*$
satisfies the discrete convex hull property
\begin{align*}
  \bsU^*(\Omega)\subset\operatorname{conv\,hull}\bsU^*(\partial\Omega)
\end{align*}
and it follows from \eqref{eq:estPiK} similar as in the proof of Theorem
\ref{thm:main} that  $\bsU^*-\bsU\in
    H_0^1(\Omega)^m$ and 
\begin{align}
  \label{eq:mainv}
  \begin{split}
    \energy(\bsU^*)=\frac12\int_\Omega|\nabla \bsU^*|^2\dx\le
    \frac12\int_\Omega|\nabla
    \bsU|^2\dx=\energy(\bsU).
  \end{split}
\end{align}
Moreover, as in Corollary \ref{cor:main}, we obtain for
$\normm{\cdot}^2=\int_\Omega\abs{\nabla \cdot}^2\dx$ that
\begin{align}
  \label{eq:mainvb}
  \normm{\bsu-\bsU^*}\le \normm{\bsu-\bsU}\qquad\text{if}~\bsu-\bsU\in
  H_0^1(\Omega)^m. 
\end{align}
\begin{remark}\label{r:reactd}
  The results of this section can be generalized
  to reaction diffusion problems.  To this end we observe that, in
  order to prove  \eqref{eq:mainv},
  we need a replacement of \eqref{eq:U^*<U} for vector-valued
  functions. Note that for reaction diffusion problems the convex hull
  property reads as 
  \begin{align*}
    \bsu(\Omega)\subset\operatorname{conv\,hull}
    \big(\{0\}\cup\bsu(\partial\Omega)\big). 
  \end{align*}
  Therefore, defining $K\definedas
  \operatorname{conv\,hull}(\{0\}\cup\bsU(\partial\Omega))$ we obtain
  $|\bsU^*(x)|\le|\bsU(x)|$, $x\in\Omega$. This is the required replacement of
  \eqref{eq:U^*<U}.  
\end{remark}

\subsection{Nonlinear Problems}
\label{sec:nonlinear-problems}
Recalling the proof of Theorem \ref{thm:main} and \eqref{eq:U*props},
we observe that formally \eqref{eq:reduction} holds for energies 
\begin{align*}
  \energy(v)=\int_\Omega \mathcal{F}(x,|v|,|\nabla v|)\dx- F(v)
\end{align*}
with $\mathcal{F}:\Omega\times\setR\times\setR\to \setR$ being monotone
in its second and third variable. In particular,
Theorem~\ref{thm:main} can be applied to many nonlinear problems. 

As an example, we consider the nonlinear $p$-Laplace problem. To this
end, for fixed $p\in(1,\infty)$, $\frac1p+\frac1q=1$, let
$W^{1,p}(\Omega)$ 
be the space of $p$
integrable functions on $\Omega$ with $p$-integrable weak derivatives.
Let $u\in W^{1,p}(\Omega)$ such that   
\begin{alignat*}{2}
  -\divo |\nabla u|^{p-2}\nabla u &=: F\le0&\qquad&\text{in}~\Omega;
\end{alignat*}
in the distributional sense. 
Then $u$
is the unique minimizer of the energy 
\begin{align*}
  \energy(v)\definedas \int_\Omega \frac1p\abs{\nabla v}^p\dx
  -F(v)\qquad\text{in}~u+W^{1,p}_0(\Omega).
\end{align*}
Here $W^{1,p}_0(\Omega)$ is the subspace of
functions in $W^{1,p}(\Omega)$ with vanishing trace on $\partial\Omega$.
Moreover, it is well known that $u$ satisfies
\begin{align*}
  \sup_{\Omega}u\le \sup_{\partial\Omega}u;
\end{align*}
see
e.g. \cite{OttLM98}. Let $U\in W^{1,p}(\Omega)$ be some approximation
to $u$ and let 
\begin{align*}
  U^*\definedas \min\big\{U,\sup_{\partial\Omega}U\big\}.
\end{align*}
This implies \eqref{eq:nablaU^*<nablaU}. 
Note that the differential operator contains no reactive term and thus
we do not require \eqref{eq:U^*<U} to conclude, similar as in the proof of Theorem \ref{thm:main}, that 
\begin{align*}
  \energy(U^*)\le\energy(U)  \qquad\text{and thus}\qquad  \energy(U^*)-\energy(u)\le \energy(U)-\energy(u).
\end{align*}

In order to prove a result analog to Corollary \ref{cor:main} it remains to
correlate the energy difference to a reasonable measure of distance. 
This can be done using the so-called quasi-norm introduced by Barrett
and Liu in \cite{BarrettLiu:93}. It follows from
\cite[Lemma 13]{Kreuzer:12} that there exist constants $C,c>0$
such that
\begin{align*}
  c\big(\energy(v)-\energy(u)\big)&\le
  \normm{v-u}^2_{(\nabla u)}
  \definedas \int_\Omega (\abs{u}+\abs{v})^{p-2}\abs{u-v}^2\dx\le  C\big(\energy(v)-\energy(u)\big)
\end{align*}
for all $v\in u+W^{1,p}_0(\Omega)$. Hence,
with $v=U$ respectively $v=U^*$, it follows from above, that
\begin{align*}
  \sqrt{c}\,\normm{u-U^*}_{(\nabla u)}\le   \sqrt{C}\,\normm{u-U}_{(\nabla u)}.
\end{align*}

\section{Conclusion}
\label{sec:conclusion}
Maximum principles often reflect physical behavior of solutions and 
it is therefore
desirable that numerical approximations satisfy a maximum
principle as well. In this paper we 
enforce the maximum principle by performing a simple
cutoff to the approximation. We show that for many problems 
this truncation even improves the approximation. 
Hence, all error estimates and convergence results for the
approximation can directly be applied to the error of the truncated function. 

We emphasize that we do not specify the sort of
approximation beyond that it is conforming. 
Therefore, among others, all presented results apply to all conforming finite
element approximation including high order elements and conforming
$hp$-methods.  There is no restriction on the underlying
partition of $\Omega$ like non obtuseness of the triangulation and
the results even apply to partitions involving complicated
element geometries.

\providecommand{\bysame}{\leavevmode\hbox to3em{\hrulefill}\thinspace}
\providecommand{\MR}{\relax\ifhmode\unskip\space\fi MR }
\providecommand{\MRhref}[2]{%
  \href{http://www.ams.org/mathscinet-getitem?mr=#1}{#2}
}
\providecommand{\href}[2]{#2}



\end{document}